\documentclass{article}[12pt]
\usepackage{amsmath,amsthm,amssymb}
\usepackage[all]{xy}
\usepackage{color}

\theoremstyle{definition}
\newtheorem{Thm}{Theorem}[section]
\newtheorem{Def}[Thm]{Definition}
\newtheorem{Lem}[Thm]{Lemma}

\newtheorem{Ex}[Thm]{Example}
\newtheorem{Prop}[Thm]{Proposition}
\newtheorem{Rem}[Thm]{Remark}

\def\Z{\mathbb{Z}}
\def\N{\mathbb{N}}

\title{The coarse Baum-Connes conjecture 
for Busemann non-positively curved spaces}
\date{}
\author{Tomohiro Fukaya\footnote
{Mathematical institute, Tohoku University, Sendai 980-8578, Japan, 
\texttt{tomo@math.tohoku.ac.jp}}, 
Shin-ichi Oguni\footnote
{Department of Mathematics, Faculty of Science, Ehime University,
Matsuyama 790-8577, Japan,
\texttt{oguni@math.sci.ehime-u.ac.jp}}}

\begin{document}

\maketitle
%%%%%%%%%%%%%%%%%%%%%%%%%%%%%%%%%%%%%%%%%%%%%%%%%
\begin{abstract}
We prove that the coarse assembly maps for
proper metric spaces which are non-positively curved  
in the sense of Busemann are isomorphisms, 
where we do not assume that the spaces are with bounded coarse geometry. 
Also it is shown that we can calculate the coarse K-homology and the K-theory of the Roe algebra
by using the visual boundaries.  \\

\noindent
Keywords:
coarse Baum-Connes conjecture, coarse compactification, 
Busemann non-positively curved space, $CAT(0)$-space, visual boundary.  \\

\noindent
2010MSC:
58J22.
\end{abstract}

\section{Introduction}\label{1}
For a proper metric space $X$, 
the coarse assembly map 
\[
\mu(X):KX_*(X)\to K_*(C^\ast X)
\]
is defined as `a coarse index map' (\cite{H-R95}, \cite{Roe93} and \cite{Yu95}).
Here $KX_*(X)$ is the coarse $K$-homology of $X$ and 
$K_*(C^\ast X)$ is the $K$-theory of the Roe algebra $C^\ast X$. 
It is well-known that $KX_*(X)$, $K_*(C^\ast X)$ and $\mu(X)$
depend only on the large-scale geometry of $X$, that is, 
the coarse structure given by the metric on $X$ (refer to \cite[Section 6]{H-R95}). 
It is interesting to know 
which proper metric space $X$ satisfies the property that 
$\mu(X)$ is an isomorphism (resp. a rational injection)
because the property is closely related to 
the analytic Novikov conjecture 
(resp. non-permission of uniformly positive scalar curvatures). 
In particular the following is called 
the coarse Baum-Connes conjecture: 
if a proper metric space $X$ is reasonable, 
for example uniformly contractible 
and with bounded coarse geometry, then 
$\mu(X)$ is an isomorphism
(See Appendix for bounded coarse geometry). 
Now we can recognize many spaces 
whose coarse assembly maps are isomorphisms.
For example, if $X$ is with bounded coarse geometry and
coarsely embeddable into a Hilbert space, 
then $\mu(X)$ is an isomorphism (\cite[Theorem 1.1]{Yu00}). 
%If $X$ has Property A, then $X$ can be embedded into a separable Hilbert space (\cite[Theorem 2.2]{Yu00}). 
%There is a class which contains spaces without bounded coarse geometry. Indeed 
Also Higson and Roe showed that 
if $X$ is geodesic and hyperbolic in the sense of Gromov, 
then $\mu(X)$ is an isomorphism (\cite[(8.2) Corollary]{H-R95}), 
where bounded coarse geometry is not assumed. 

%For some proper metric spaces, some properties depending on the large-scale geometry can be captured by some boundaries. 
Now we consider 
which proper metric space $X$ has a boundary with informations of 
$KX_*(X)$ and $K_*(C^\ast X)$.
This question is a variant of Weinberger conjecture 
which gives a sufficient condition for 
the property that the assembly map is rationally injective 
(see \cite[(6.33) Conjecture and (6.34) Proposition]{Roe93}).
Let $W$ be a boundary which gives a coarse compactification of a proper metric space $X$ 
(see Section \ref{2}). 
Then we have the following commutative diagram: 
\[
\xymatrix{
&KX_*(X) \ar[d]_{T_W} \ar[r]^{\mu(X)} 
& K_*(C^\ast X) \ar[dl]^{b_W}
\\
&\widetilde{K}_{*-1}(W) 
&.  
}
\]
Here $\widetilde{K}_{*-1}(W)$ 
is the reduced $K$-homology of $W$, 
$T_W$ is a transgression map
and $b_W$ is a map defined in \cite[Appendix]{H-R95}
(refer to \cite[Section 1.2]{F-O13}). 
There are some classes whose element $X$ 
has a coarse compactification $X\cup W$ 
satisfying that $T_W$ and $b_W$ are isomorphisms. 
Such a typical class consists of unbounded 
proper geodesic hyperbolic spaces in the sense of Gromov
(see \cite[Corollary 5.3]{F-O13}). 
Indeed the Gromov completions are desired coarse compactifications. 
See \cite{F-O13} for other classes.
%Other classes were given in \cite[Proposition A.1 and Corollary 9.1]{F-O13}. 
%Indeed lattices of Carnot groups 
%and non-uniform lattices of rank one symmetric spaces of non-compact type 
%are examples. 

In this note, we deal with proper metric spaces 
which are non-positively curved in the sense of Busemann 
(for short, proper Busemann spaces). 
Proper $CAT(0)$-spaces are 
typical examples.
The following is our main theorem: 
\begin{Thm}\label{main}
Let $X$ be an unbounded proper Busemann space
(which is not necessarily with bounded coarse geometry) 
and $\partial_v X$ be the visual boundary. 
Then $\mu(X)$ is an isomorphism.
Also $X\cup \partial_vX$ is a coarse compactification of $X$, 
and moreover $T_{\partial_vX}$ and $b_{\partial_vX}$ are isomorphisms. 
\end{Thm}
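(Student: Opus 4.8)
The plan is to reduce the entire statement to the two assertions that $T_{\partial_v X}$ and $b_{\partial_v X}$ are isomorphisms. Indeed, the triangle in the Introduction commutes, so $T_{\partial_v X} = b_{\partial_v X}\circ\mu(X)$; once the two boundary maps are known to be isomorphisms, $\mu(X) = b_{\partial_v X}^{-1}\circ T_{\partial_v X}$ is automatically an isomorphism, and no separate argument for $\mu(X)$ is required. Thus the work concentrates on the two boundary maps together with the preliminary claim that $X\cup\partial_v X$ is a coarse compactification.

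First I would recall the relevant geometry of a proper Busemann space: it is uniquely geodesic, the function $t\mapsto d(\gamma(t),\sigma(t))$ is convex along any pair of geodesics, and after fixing a basepoint $x_0$ the geodesic rays issuing from $x_0$ are in bijection with $\partial_v X$, so that $\overline{X}:=X\cup\partial_v X$ with the cone topology is compact and metrizable and the radial geodesic contraction $c\colon\overline{X}\times[0,1]\to\overline{X}$ toward $x_0$ is continuous. To see that $\overline{X}$ is a \emph{coarse} compactification I would verify the compatibility condition directly from convexity: if $d(x_n,y_n)$ stays bounded and $x_n\to\xi\in\partial_v X$, then convexity of the distance between the geodesics $[x_0,x_n]$ and $[x_0,y_n]$, together with the fact that a convex function vanishing at $0$ has nondecreasing difference quotient, forces $y_n\to\xi$ as well. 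This is exactly the statement that the closure of every entourage meets $\partial_v X\times\partial_v X$ only in the diagonal.

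Next I would exploit the contraction $c$ to compute the two boundary maps. The contraction exhibits $\overline{X}$ as the topological cone on $\partial_v X$ and makes $X$ coarsely contractible, so $\widetilde{K}_*(\overline{X})=0$. On the topological side, the comparison between coarse and ordinary $K$-homology for a coarse compactification identifies $KX_*(X)$ with the relative group $K_*(\overline{X},\partial_v X)$, and the long exact sequence of the pair then yields $K_*(\overline{X},\partial_v X)\cong\widetilde{K}_{*-1}(\partial_v X)$; tracing the definitions should show this composite is precisely $T_{\partial_v X}$. For $b_{\partial_v X}$ I would run the parallel analysis on the Roe algebra, using the ideal/quotient sequence attached to the action of $C(\overline{X})$ and the contraction $c$ to produce the homotopy or asymptotic morphism computing $K_*(C^\ast X)$ as $\widetilde{K}_{*-1}(\partial_v X)$. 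Wherever a genuine cone comparison is needed I would appeal to the general criterion of \cite{F-O13}, checking its hypotheses against the continuity of $c$ and the coarse compatibility established above.

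The main obstacle I anticipate is the $b_{\partial_v X}$ step, precisely because $X$ is \emph{not} assumed to have bounded coarse geometry: the usual route to such C*-algebraic cone computations (property A, bounded-geometry cone models, or the embedding theorem of \cite{Yu00}) is unavailable here, so one cannot simply quote an existing open-cone calculation. Instead the convexity of the Busemann metric must be used essentially, both to guarantee that corresponding points on distinct rays from $x_0$ diverge at least linearly and to ensure the radial contraction is controlled enough to induce well-defined maps on $K_*(C^\ast X)$. Turning this geometric input into the required asymptotic morphism, uniformly over the compact boundary $\partial_v X$, is where the real effort lies.
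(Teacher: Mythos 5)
Your reduction of $\mu(X)$ to the two boundary maps is logically sound, and your convexity argument for the coarse compactification is essentially the paper's Proposition \ref{visual}. However, both pillars that the rest of your plan stands on contain genuine gaps. First, the claimed ``comparison'' identifying $KX_*(X)$ with the relative group $K_*(X\cup\partial_vX,\partial_vX)$ is not a formal property of coarse compactifications: since the relative $K$-homology of a compact metrizable pair is the locally finite $K$-homology of the open complement, one has $K_*(X\cup\partial_vX,\partial_vX)\cong K_*(X)$, so this identification is exactly the assertion that the coarsening map $c(X):K_*(X)\to KX_*(X)$ is an isomorphism. That assertion fails for general proper metric spaces, and the classical sufficient condition (uniform contractibility together with bounded coarse geometry) is unavailable here by hypothesis; establishing it for proper Busemann spaces without bounded coarse geometry is precisely the key technical content of the paper (Proposition \ref{coarsening}), which requires approximating coarse maps out of nerve complexes by continuous maps via barycenters (Lemma \ref{key}) and straight-line geodesic homotopies (Lemma \ref{homotopy}). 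There is no general criterion in \cite{F-O13} whose hypotheses you can simply check to obtain this for free. Second, for $b_{\partial_vX}$ you give no argument at all: you correctly observe that every standard route to computing $K_*(C^\ast X)$ breaks down without bounded coarse geometry, and then describe the missing construction as ``where the real effort lies''---but that effort is the proof, and it is not supplied.

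The paper's route avoids both of these head-on computations, and its logical flow is the reverse of yours. It never computes $K_*(C^\ast X)$ directly against the boundary: instead it shows that a proper Busemann space is scalable, so the assembly map $A(X):K_*(X)\to K_*(C^\ast X)$ is an isomorphism by an Eilenberg swindle (Proposition \ref{scalable}); combined with the coarsening isomorphism of Proposition \ref{coarsening}, the commutative diagram (\ref{comm}) makes $\mu(X)$ an isomorphism. Then the contractibility of $X\cup\partial_vX$ (Proposition \ref{contractible}) forces the boundary map $\partial_{\partial_vX}$ of the pair to be an isomorphism, and $T_{\partial_vX}$ and $b_{\partial_vX}$ are isomorphisms by chasing the commutative diagram of Section \ref{4}. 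In short, the paper derives the boundary maps from $\mu(X)$ together with contractibility, whereas your plan derives $\mu(X)$ from the boundary maps---a direction that obliges you to carry out exactly the two hard computations that the scalability-plus-contractibility argument is designed to bypass.
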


\begin{Rem}
A key in our proof of Theorem \ref{main} 
is to show that the coarsening map
\[c(X):K_*(X)\to KX_*(X)\]
is an isomorphism for any proper Busemann space $X$. 
In general, when a proper metric space 
is uniformly contractible and with bounded coarse geometry, 
the coarsening map is an isomorphism 
(see \cite[(3.8) Proposition]{H-R95}, \cite[Proof of Theorem 4.8]{E-M06} and \cite[Section 3.2]{F-O13}). 
Proper Busemann spaces are uniformly contractible, 
but they are not necessarily with bounded coarse geometry
(see Example \ref{ex'}). 
In \cite[the below of (7.5) Corollary]{H-R95}, Higson and Roe claimed that 
a proper `non-positively curved' space $X$ 
without necessarily bounded coarse geometry 
satisfies that the coarsening map is an isomorphism, 
but they did not give details. 
Indeed we prove it for proper Busemann spaces (Proposition \ref{coarsening}). 
\end{Rem}

\section{The visual boundaries of proper Busemann spaces}\label{2}
In this section, we study the visual boundaries of proper Busemann spaces
from a coarsely geometric viewpoint. 
We refer to \cite{Wil09} which deals with $CAT(0)$-spaces (see also \cite{Wil13}).

The following is one of equivalent definitions of Busemann spaces
(\cite[Proposition 8.1.2 (viii)]{Pap05}).
\begin{Def}\label{Busemann}
Let $X$ be a space endowed with a metric $d$. 
We call $X$ a {\it non-positively curved space in the sense of Busemann}, 
for short, a {\it Busemann space} 
if $X$ is geodesic and satisfies the following: 
for any points $x_0,x_1,y_0,y_1\in X$, any geodesics $\overline{x_0x_1}, \overline{y_0y_1}$ 
and any $t\in [0,1]$, 
$d(x_t,y_t)$ is smaller than or equal to $(1-t)d(x_0,y_0)+td(x_1,y_1)$, 
where $x_t\in \overline{x_0x_1}$ and $y_t\in \overline{y_0y_1}$ with 
$d(x_0,x_t)=td(x_0,x_1)$ and $d(y_0,y_t)=td(y_0,y_1)$, respectively. 
\end{Def}
\noindent 
$CAT(0)$-spaces and strictly convex Banach spaces 
like $l^p$-spaces ($1<p<\infty$) are 
typical examples of Busemann non-positively curved spaces
(see also Example \ref{ex}). 

We consider the visual boundaries. 
Let $X$ be a proper Busemann space. 
Fix a basepoint $o\in X$. 
Since two points in a Busemann space are connected by a unique geodesic, 
we have a continuous map 
\[X\times [0,1]\ni(x,t)\mapsto \delta_t(x)\in X, \]
where $\delta_t(x)$ is characterized as a point on a geodesic from $o$ to $x$ 
with $d(o,\delta_t(x))=td(o,x)$.
For each $t\in (0,\infty)$, put $B(o,t):=\{a\in X \ | \ d(o,a)\le t\}$. 
For any $s, t\in (0,\infty)$ with $s<t$, 
we define a surjection 
$\pi_{s,t}: B(o,t) \to B(o,s)$ 
as $\pi_{s,t}(a):=a$ if $d(o,a)\le s$
and $\pi_{s,t}(a):=\delta_{s/d(o,a)}(a)$
if $d(o,a)> s$. 
Also for any $t\in (0,\infty)$, we define a surjection
$\pi_t:X \to B(o,t)$ as $\pi_{t}(a):=a$ if $d(o,a)\le t$
and $\pi_{t}(a):=\delta_{t/d(o,a)}(a)$ if $d(o,a)> t$.
We consider a projective system consisting of
$\{\pi_{s,t}: B(o,t) \to B(o,s)\}_{0<s<t}$. Then the projective 
limit $\underleftarrow{\lim}B(o,t)$ contains $X$ as an open dense subset 
by the map $\underleftarrow{\lim}\pi_t:X\to \underleftarrow{\lim}B(o,t)$.
We put $\overline{X}:=\underleftarrow{\lim}B(o,t)$ and 
$\partial_v X:=\overline{X}\setminus X$.
We call $\partial_v X$ the {\it visual boundary} of $X$. 
The visual boundary of $X$ is independent (up to canonical homeomorphisms) 
of choice of basepoints by \cite[Main theorem]{Hot97}. 

\begin{Ex}\label{ex}
Let $p$ belong to $(1,\infty)$. 
For each positive integer $n$, 
denote by $\ell_p(n)$ the $n$-dimensional $\ell_p$-space. 
Then $\ell_p(n)$ is a proper Busemann space and 
the visual boundary is homeomorphic to the $(n-1)$-dimensional sphere $S^{n-1}$. 
We remark that $\ell_p(n)$ is $CAT(0)$ if and only if $n=1$ or $p=2$. 

For each $p\in (1,\infty)$, 
we give an example which is a proper Busemann space without bounded coarse geometry. 
First we consider a half-line $[0,\infty)$ with the standard metric by absolute values.   
Next we identify the zero vector of $\ell_p(n)$ with $n\in [0,\infty)$ 
for all positive integers $n$. 
Finally we endow the resulting space $X_p$ with the path-metric, 
where the embeddings $\ell_p(n)\to X_p$ for all $n\in \N$ 
and $[0,\infty)\to X_p$ are isometric. 
Then $X_p$ is a proper Busemann space 
and the visual boundary of $X_p$ is homeomorphic to 
the one-point compactification of $\bigsqcup_{n\in\N} \partial_v\ell_p(n)$, 
that is, 
the one-point compactification of $\bigsqcup_{n\in\N} S^{n-1}$. 
Obviously $X_p$ is $CAT(0)$ if and only if $p=2$.
In Appendix we show that $X_p$ is without bounded coarse geometry.
\end{Ex}

The following is implied by 
the argument in \cite[Proof of Lemma 4.6.1]{Wil09} for proper $CAT(0)$-spaces and the visual boundaries. 
\begin{Prop}\label{contractible}
Let $X$ be a proper Busemann space. 
Then $X\cup \partial_vX$ is contractible.
\end{Prop}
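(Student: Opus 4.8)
The plan is to construct an explicit contraction of $\overline{X} = X \cup \partial_v X$ to the basepoint $o$, using the geodesic structure that makes $X$ a Busemann space. The natural candidate is the ``geodesic contraction'' $H\colon \overline{X}\times[0,1]\to\overline{X}$ that slides each point toward $o$ along the geodesic ray or segment determined by it. Concretely, for $x\in X$ one sets $H(x,s)=\delta_s(x)$, the point at parameter $s$ along the unique geodesic from $o$ to $x$ (reparametrized so that $H(x,0)=o$ and $H(x,1)=x$), and for a boundary point $\xi\in\partial_v X$, represented as an element of the projective limit $\underleftarrow{\lim}B(o,t)$, one contracts along the corresponding geodesic ray. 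The whole point of Busemann non-positive curvature is the inequality $d(x_t,y_t)\le (1-t)d(x_0,y_0)+td(x_1,y_1)$ from Definition \ref{Busemann}, which when applied with $x_0=y_0=o$ gives convexity of the distance between two radial geodesics; this is exactly what forces the geodesic contraction to be jointly continuous on $X$.

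First I would fix the contraction on $X$ and verify its joint continuity there, which follows from the convexity inequality together with the already-noted continuity of $(x,t)\mapsto\delta_t(x)$. Second, and this is where the real work lies, I would extend $H$ continuously across the boundary $\partial_v X$. A point $\xi\in\overline{X}$ is a compatible family $(\pi_t(\xi))_t$ in the inverse system $\{B(o,t)\}$, and the contraction must be defined so that it respects this projective-limit structure: retracting along the ray toward $\xi$ should commute, up to the reparametrization, with the projections $\pi_{s,t}$. I would therefore define $H$ on $\overline{X}$ at the level of the inverse system and check that it descends to a well-defined map into $\overline{X}=\underleftarrow{\lim}B(o,t)$. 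The key estimate is again the Busemann convexity applied to the radial geodesics, which controls how $d\bigl(\pi_t(H(x,s)),\pi_t(H(y,s))\bigr)$ behaves and guarantees that sequences in $X$ converging to boundary points have contractions converging to the contractions of those boundary points.

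The main obstacle I anticipate is precisely this continuity of $H$ at points $(\xi,s)$ with $\xi\in\partial_v X$, in the projective-limit topology on $\overline{X}$. Unlike the interior, where geodesics are genuine segments of finite length, on the boundary one is contracting along infinite rays, and one must verify that the reparametrization sending $H(\xi,1)=\xi$ to $H(\xi,0)=o$ varies continuously as $\xi$ ranges over $\partial_v X$ and as interior points approach the boundary. Here the properness of $X$ and the uniqueness of geodesics in a Busemann space are essential: uniqueness makes $\delta_t$ and hence $H$ single-valued, while properness ensures the inverse system is built from compact pieces $B(o,t)$, so that convergence in $\overline{X}$ is controlled level by level. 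Since the cited argument from \cite[Proof of Lemma 4.6.1]{Wil09} carries out exactly this construction for proper $CAT(0)$-spaces using only the convexity of the metric along geodesics, and since that convexity is precisely the defining property of Busemann spaces, I expect the $CAT(0)$ proof to transfer essentially verbatim once the geodesic contraction is set up in terms of the inverse system above.
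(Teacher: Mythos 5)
Your overall strategy --- contract $\overline{X}$ radially toward $o$, control continuity with the Busemann convexity inequality, and organize the extension over $\partial_v X$ through the inverse system --- is exactly the strategy of the argument the paper invokes (it gives no independent proof, citing instead \cite[Proof of Lemma 4.6.1]{Wil09} and noting it transfers from $CAT(0)$ to Busemann). However, the explicit homotopy you write down on the interior fails, and it fails precisely at the point you identify as ``where the real work lies.'' With $H(x,s)=\delta_s(x)$ (proportional scaling, so $d(o,H(x,s))=s\,d(o,x)$), the map $H$ admits \emph{no} continuous extension to $\overline{X}\times[0,1]$ whatsoever. Indeed, fix $\xi\in\partial_v X$ and let $\gamma:[0,\infty)\to X$ be the unit-speed ray from $o$ representing $\xi$. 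For fixed $s\in(0,1]$ we have $\delta_s(\gamma(n))=\gamma(sn)\to\xi$ as $n\to\infty$, so any continuous extension must satisfy $H(\xi,s)=\xi$ for every $s>0$; letting $s\to 0$ then forces $H(\xi,0)=\xi$, contradicting the fact that continuity also forces $H(\xi,0)=\lim_{x\to\xi}H(x,0)=o$. Equivalently, proportional scaling is incompatible with your own requirement that the homotopy respect the projective-limit structure: it does not commute with the projections, since for $s\,d(o,x)\ge t$ one has $\pi_t(\delta_s(x))=\pi_t(x)$, whereas $\delta_s(\pi_t(x))$ lies at distance $st$ from $o$, so the family $\{\delta_s\}$ does not descend to $\underleftarrow{\lim}B(o,t)$.

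The missing idea is to contract by radial \emph{truncation} rather than radial \emph{scaling}: choose a homeomorphism $\rho:[0,1)\to[0,\infty)$ and set $H(x,s)=\pi_{\rho(s)}(x)$ for $s<1$ and $H(x,1)=x$, where $\pi_r:\overline{X}\to B(o,r)$ is the projection of the inverse limit (on $X$ this sends $x$ to the point of the geodesic from $o$ to $x$ at distance $\min\{\rho(s),d(o,x)\}$ from $o$). These maps genuinely commute with the structure maps, $\pi_t\circ\pi_r=\pi_{\min\{t,r\}}$, so $H$ is well defined on all of $\overline{X}$; continuity at $s=1$ is then automatic from the inverse-limit topology (for each fixed $t$ one has $\pi_t\circ H(\cdot,s)=\pi_t$ as soon as $\rho(s)\ge t$), and joint continuity for $s<1$ reduces to continuity of $(a,r)\mapsto\pi_r(a)$ on the compact balls $B(o,t)$, which is exactly where uniqueness of geodesics, properness, and the convexity inequality of Definition \ref{Busemann} enter. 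With this correction your argument coincides with the one the paper relies on: Willett's construction is of this truncated form, and the only point needing verification in the Busemann setting is that its continuity estimates use convexity of the metric rather than the $CAT(0)$ inequality (contrast with Proposition \ref{visual}, where the paper does have to modify Willett's proof for exactly that reason).
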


Now we recall a definition of coarse compactifications.
Let $X$ be a proper metric space. 
We denote by $C_b(X)$ the set of all bounded continuous functions of $X$. 
An element $f\in C_b(X)$ is a Higson function 
if for any $\epsilon>0$ and any $R> 0$, there exists a bounded set $K\subset X$
such that $x,y\in X$ with $d(x,y)<R$ and $x,y\not\in K$ 
satisfy $\left| f(x)-f(y)\right|<\epsilon$.
We denote by $C_h(X)$ the set of all Higson functions of $X$. 
Let $\overline{X}$ be a compact metrizable space which is a compactification of $X$.
We denote by $C(\overline{X})$ 
the set of all continuous functions of $\overline{X}$. 
Then $C(\overline{X})$ is naturally identified with a subalgebra of $C_b(X)$ 
by the Gelfand-Naimark theorem. 
After the identification, if every element of $C(\overline{X})$ 
is a Higson function, then we call 
$\overline{X}$ a {\it coarse compactification} of $X$. 

The following is known for proper $CAT(0)$-spaces and the visual boundaries 
(\cite[Lemma 4.6.2]{Wil09}). 
\begin{Prop}\label{visual}
Let $X$ be a proper Busemann space. 
Then $X\cup \partial_vX$ is a coarse compactification of $X$.
\end{Prop}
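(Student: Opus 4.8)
The plan is to verify the defining property directly: I must show that every $f\in C(\overline{X})$, regarded as an element of $C_b(X)$ by restriction, is a Higson function. Since $\overline{X}=\underleftarrow{\lim}B(o,t)$ is compact metrizable, I fix a compatible metric, the convenient choice being the standard inverse-limit metric built from integer radii,
\[
\bar\rho(\xi,\eta):=\sum_{n=1}^{\infty} 2^{-n}\,\frac{d(\pi_n\xi,\pi_n\eta)}{1+d(\pi_n\xi,\pi_n\eta)},
\]
where $\pi_n\colon\overline{X}\to B(o,n)$ denotes the canonical coordinate projection (restricting to the map $\pi_n$ on $X$). Any $f\in C(\overline{X})$ is then uniformly continuous, so for $\epsilon>0$ there is $\delta>0$ with $\bar\rho(\xi,\eta)<\delta\Rightarrow|f(\xi)-f(\eta)|<\epsilon$. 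Fixing also $R>0$, the whole problem reduces to a purely geometric statement: find a radius $S$ so that any $x,y\in X$ with $d(x,y)<R$ and $d(o,x),d(o,y)>S$ have $\bar\rho$-close images in $\overline{X}$; then $K:=B(o,S)$ witnesses the Higson condition.

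The heart of the argument is estimating $d(\pi_n x,\pi_n y)$ for a fixed radius $n$, under the hypotheses $a:=d(o,x)\ge S$, $b:=d(o,y)\ge S$, $d(x,y)<R$, and $n\le S$. Here $\pi_n x=\delta_{n/a}(x)$ and $\pi_n y=\delta_{n/b}(y)$ are the radial projections, sitting at parameters $n/a$ and $n/b$ on the geodesics $\overline{ox}$ and $\overline{oy}$. The obstacle is precisely that these parameters differ, so Definition \ref{Busemann} cannot be applied to the two points directly. I would overcome this by splitting, via the triangle inequality, through the auxiliary point $x_{n/b}\in\overline{ox}$ at the common parameter $n/b$. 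Sliding along the single geodesic $\overline{ox}$ gives $d(\pi_n x,x_{n/b})=n|a-b|/b\le nR/S$, using $|a-b|\le d(x,y)<R$ and $b\ge S$; and Busemann convexity applied to $\overline{ox},\overline{oy}$ at parameter $t=n/b$ gives $d(x_{n/b},\pi_n y)\le (n/b)\,d(x,y)<nR/S$. Hence $d(\pi_n x,\pi_n y)<2nR/S$, which tends to $0$ as $S\to\infty$ for each fixed $n$.

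With this estimate the remaining step is routine bookkeeping on $\bar\rho$: choose $N$ so that the tail $\sum_{n>N}2^{-n}=2^{-N}$ is below $\delta/2$, then take $S\ge N$ large enough that $2NR/S<\delta/2$, forcing each of the first $N$ summands below $\delta/2$ as well, so $\bar\rho<\delta$. This completes the geometric lemma and hence the proposition, adapting the $CAT(0)$ argument of \cite[Lemma 4.6.2]{Wil09} to the Busemann setting. The only genuinely non-formal input is the convexity inequality of Definition \ref{Busemann}, and I expect the main subtlety to be exactly the parameter-mismatch handled above; everything else is the standard passage from uniform continuity on the compactification to the Higson condition.
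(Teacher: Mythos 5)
Your proof is correct, but it is organized differently from the paper's. The paper does not metrize the inverse limit: it identifies $C(X\cup\partial_vX)$ with the closure in $C_b(X)$ of $\bigcup_{t>0}C(B(o,t))\circ\pi_t$, and checks the Higson condition only for the dense family $F=f\circ\pi_t$ (implicitly using that Higson functions form a closed subalgebra). Its geometric step also sidesteps the parameter mismatch that you identify as the crux: instead of comparing the radial projections $\pi_t(a),\pi_t(b)$, the paper evaluates $F$ at the rescaled points $a'=\delta_{t/r}(a)$, $b'=\delta_{t/r}(b)$ with the \emph{common} parameter $t/r$, $r=\min\{d(o,a),d(o,b)\}$; since $a'$ lies on the geodesic $\overline{oa}$ beyond radius $t$, one has $F(a')=F(a)$ and $F(b')=F(b)$ exactly, so a single application of Busemann convexity, $d(a',b')\le (t/r)d(a,b)$, plus uniform continuity of $f$ on the compact ball $B(o,t+R)$ finishes the argument. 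Your route instead works with an arbitrary $f\in C(\overline{X})$ at once, via the compatible inverse-limit metric $\bar\rho$, and confronts the mismatch head-on with the split $d(\pi_nx,\pi_ny)\le d(\pi_nx,x_{n/b})+d(x_{n/b},\pi_ny)\le 2nR/S$; this is closer to Willett's original $CAT(0)$ argument and has the small advantage of not needing the density-plus-closedness step, at the cost of the coordinate-wise bookkeeping. One wording in that bookkeeping should be tightened: what you actually need is that for $n\le N$ each ratio $d(\pi_nx,\pi_ny)/(1+d(\pi_nx,\pi_ny))<\delta/2$, so that the weighted sum satisfies $\sum_{n\le N}2^{-n}\cdot(\delta/2)<\delta/2$; saying "each of the first $N$ summands is below $\delta/2$" does not by itself bound their sum, though your choice of $S$ does make the corrected estimate go through.
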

\begin{proof}
We need to modify the argument in \cite[Proof of Lemma 4.6.2]{Wil09} 
because his proof uses the $CAT(0)$-inequality.
%In fact the following proof is parallel 
%to the argument in \cite[Proof of Proposition A.1]{F-O13}. 

We recall that $X\cup \partial_vX=\underleftarrow{\lim}B(o,t)$. 
We naturally regard $C(B(o,t))$ as a subset of $C_b(X)$ by 
\[C(B(o,t))\ni f\mapsto f\circ \pi_t\in C_b(X), \]
where $C(B(o,t))$ is the set of all continuous functions on $B(o,t)$. 
Then $C(X\cup \partial_vX)$ is identified with the closure of $\bigcup_{t>0} C(B(o,t))$.
We take $t\in (0,\infty)$ and $f\in C(B(o,t))$.  
Then it is enough to prove that 
$F:=f\circ \pi_t$ is a Higson function of $X$. 
We take $\epsilon>0$ and $R>0$. 
Since $B(o,t+R)$ is compact, we have $\delta>0$
such that $|F(a')-F(b')|<\epsilon$
for any $a',b'\in B(o,t+R)$ with $d(a',b')<\delta$.
We take any $a,b\in X$ with $d(a,b)<R$ and have
\[
r:=\min\{r_a:=d(o,a),r_b:=d(o,b)\}>S:=\max\{t, tR/\delta\}.
\] 
We put $a'=\delta_{t/r} (a)$ and $b'=\delta_{t/r} (b)$.
Then we have 
\[t\le d(o,a')=tr_a/r< t+R,\, t\le d(o,b')=tr_b/r< t+R\]
and $F(a')=F(a),\, F(b')=F(b)$.
Since $X$ is a Busemann space, we have 
\[
d(a',b')=d(\delta_{\frac{t}{r}} (a),\delta_{\frac{t}{r}} (b))
\le \frac{t}{r}d(a,b)<\frac{tR}{r}\le \delta.  
\]
Hence we have $|F(a)-F(b)|< \epsilon$. 
\end{proof}

\section{The coarse assembly maps for proper Busemann spaces}\label{3}
In this section we prove that 
the coarse assembly maps for proper Busemann spaces are isomorphisms. 

Let $X$ be a proper metric space. 
Then we have the following commutative diagram: 
\begin{equation}\label{comm}
\xymatrix{
&K_*(X) \ar[dr]^{c(X)} \ar[rr]^{A(X)} & \ar[r] 
& K_*(C^\ast X) \\
&&KX_*(X) \ar[ur]^{\mu(X)}  
&. 
}
\end{equation}
Here $K_*(X)$ is the $K$-homology of $X$, 
$C^\ast X$ is the Roe algebra of $X$, 
$K_*(C^\ast X)$ is the $K$-theory of $C^\ast X$, 
$A(X)$ is the assembly map for $X$, 
$c(X)$ is the coarsening map for $X$ and 
$\mu(X)$ is the coarse assembly map for $X$ 
(see \cite[Section 6]{H-R95} and also \cite[Section 1]{F-O13}). 

If a proper metric space $X$ is a scalable space 
(which is not necessarily with bounded coarse geometry), 
then Higson and Roe proved that the assembly map $A(X)$ is an isomorphism
by a so-called Eilenberg swindle (see \cite[(7.2) Theorem]{H-R95} and \cite[12.4.11 Theorem]{H-R00}). 
Proper $CAT(0)$-spaces 
are such examples (see \cite[Exercise 12.7.4]{H-R00} and \cite[Proof of Proposition 4.6.3]{Wil09}).
Their proof works well for proper Busemann spaces and thus 
we have the following: 
\begin{Prop}\label{scalable}
Let $X$ be a proper Busemann space. 
Then $X$ is scalable and thus $A(X)$ is an isomorphism. 
\end{Prop}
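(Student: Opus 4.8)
The plan is to exhibit the geodesic contraction constructed in Section \ref{2} as a scaling, so that the Eilenberg swindle of Higson and Roe applies just as in the $CAT(0)$ case. Fix the basepoint $o\in X$ and recall the continuous map $X\times[0,1]\to X$, $(x,t)\mapsto\delta_t(x)$, where $\delta_t(x)$ is the point on the geodesic from $o$ to $x$ with $d(o,\delta_t(x))=t\,d(o,x)$. By construction $\delta_1=\mathrm{id}_X$ and $\delta_0$ is the constant map onto $o$, and $(x,t)\mapsto\delta_t(x)$ is continuous; thus $\{\delta_t\}_{t\in[0,1]}$ is a continuous homotopy contracting $X$ to the basepoint.

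The key step is the contraction estimate, and here the Busemann hypothesis does exactly the work that the $CAT(0)$-inequality does in \cite[Proof of Proposition 4.6.3]{Wil09}. I would apply the Busemann convexity inequality of Definition \ref{Busemann} to the pair of geodesics $\overline{ox}$ and $\overline{oy}$, taking $x_0=y_0=o$, $x_1=x$ and $y_1=y$. Since $\delta_t(x)$ and $\delta_t(y)$ are, by their very definition, the points $x_t$ and $y_t$ on these geodesics at parameter $t$, the inequality gives
\[
d(\delta_t(x),\delta_t(y))\le (1-t)\,d(o,o)+t\,d(x,y)=t\,d(x,y)
\]
for all $x,y\in X$ and all $t\in[0,1]$. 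Hence each $\delta_t$ is $t$-Lipschitz, so the family scales distances down uniformly as $t\to 0$.

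With continuity and the contraction estimate in hand, I would verify that $\{\delta_t\}$ satisfies the formal requirements of a scaling in the sense of \cite[(7.2) Theorem]{H-R95} and \cite[12.4.11 Theorem]{H-R00}: each $\delta_t$ is a continuous coarse map (the displayed bound controls its expansion, and properness follows since $X$ is proper and $\delta_t$ moves points toward $o$), and $(x,t)\mapsto\delta_t(x)$ is a continuous coarse homotopy from $\mathrm{id}_X$ to a map with bounded image. These checks are routine consequences of the material of Section \ref{2} together with the estimate above. Once the scaling is established, the Eilenberg swindle shows that $X$ is scalable and that $A(X)$ is an isomorphism, exactly as for proper $CAT(0)$-spaces.

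I do not expect a genuine obstacle: the entire content of the proposition is that Busemann convexity is strong enough to reproduce the contraction estimate on which the $CAT(0)$ argument rests, and this follows in one line from Definition \ref{Busemann}. The only point demanding care is the bookkeeping confirming that the geodesic contraction is a coarse homotopy \emph{through coarse maps}, i.e.\ that properness and coarseness hold uniformly along the family rather than merely pointwise; but this is immediate from the $t$-Lipschitz bound together with properness of $X$.
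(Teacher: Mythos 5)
Your proposal is correct and is essentially the paper's own argument: the paper establishes this proposition by noting that the Higson--Roe/Willett proof of scalability for proper $CAT(0)$-spaces (geodesic contraction $\delta_t$ toward a basepoint, followed by the Eilenberg swindle) carries over verbatim once the $CAT(0)$ comparison inequality is replaced by Busemann convexity, which is exactly your one-line estimate $d(\delta_t(x),\delta_t(y))\le t\,d(x,y)$. The paper leaves the remaining bookkeeping (properness and coarseness along the family) implicit, just as you do.
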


We recall definitions of coarse maps. 
Let $(X,d_X)$ and $(Y,d_Y)$ be proper metric spaces.
A map $f: Y\to X$ is a {\it coarse map} if $f$ satisfies the following:
\begin{itemize}
\item for any bounded subset $K$ of $X$, $f^{-1}(K)$ is bounded; 
\item for any $R>0$, there exists $S>0$ such that $y,y'\in Y$ with $d_Y(y,y')<R$
satisfy $d_X(f(y),f(y'))<S$. 
\end{itemize}
Two coarse maps $f,g:Y\to X$ are {\it close} if there exists $C>0$ such that 
$d_X(f(y),g(y))<C$ for any $y\in Y$. 

We need two lemmas in order to prove Proposition \ref{coarsening}.
\begin{Lem}\label{key}
Let $X$ be a proper Busemann space, 
$Y$ be a connected locally finite simplicial complex with the spherical metric
and $f:Y\to X$ be a coarse map. 
Then there exists a continuous coarse map $g:Y\to X$ 
such that $g$ and $f$ are close. 
\end{Lem}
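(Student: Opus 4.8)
The plan is to build $g$ by induction on the skeleta of $Y$, replacing straight-line extension by geodesic coning in $X$, and to control everything by the single coarse modulus of $f$ at the scale of one simplex. First I note that the spherical metric makes every simplex of $Y$ have diameter bounded by a universal constant $D$ (each simplex being isometric to a spherical simplex), \emph{independently of its dimension}; this is the whole point of using the spherical rather than the standard metric. Since $f$ is a coarse map, there is $S>0$ such that $d_Y(y,y')<D$ implies $d_X(f(y),f(y'))<S$. In particular, for the vertices $v_0,\dots,v_k$ of any single simplex $\sigma$ of $Y$ we have $d_X(f(v_i),f(v_j))<S$ for all $i,j$.

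I would fix a total order on the vertices of $Y$, set $g:=f$ on the $0$-skeleton, and extend over each $k$-simplex $\sigma=[v_0,\dots,v_k]$ (with $v_0$ its minimal vertex) by coning from $g(v_0)$: writing $p\in\sigma$ uniquely as a point at parameter $s\in[0,1]$ along the segment from $v_0$ to a point $q$ of the opposite face $\tau=[v_1,\dots,v_k]$, I set $g(p)$ to be the point at parameter $s$ on the unique geodesic in $X$ from $g(v_0)$ to $g(q)$. Because $X$ is a proper Busemann space, geodesics are unique and depend continuously on their endpoints, so $g|_\sigma$ is continuous. Coning always from the minimal vertex guarantees compatibility on shared faces: a point lying on a face $F\subset\sigma$ that contains $v_0$ is coned from $v_0$ in exactly the same way whether computed in $\sigma$ or in $F$, so the maps on the skeleta agree and, by local finiteness of $Y$, assemble into a single continuous map $g\colon Y\to X$.

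The crux is a dimension-free bound on the image of each simplex, and this is where Definition \ref{Busemann} is essential. The Busemann inequality implies that for every fixed point $x_0\in X$ the function $d(x_0,\cdot)$ is convex along geodesics, hence attains its maximum on any geodesic segment at an endpoint. Applying this with $x_0=g(v_0)$ and using the iterated cone structure, I claim $d\bigl(g(v_0),g(p)\bigr)\le \max_j d\bigl(g(v_0),g(v_j)\bigr)$ for all $p\in\sigma$: since $g(p)$ lies on the geodesic from $g(v_0)$ to $g(q)$ with $q\in\tau$, we have $d(g(v_0),g(p))\le d(g(v_0),g(q))$, and convexity of $d(g(v_0),\cdot)$ along the geodesics defining $g|_\tau$ pushes the supremum over $q\in\tau$ onto a vertex $g(v_j)$, $j\ge1$. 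Iterating over the dimension gives the bound with no dependence on $k$. The naive triangle-inequality estimate would accumulate an error proportional to $\dim\sigma$ and fail for infinite-dimensional $Y$; the convexity of the distance function is precisely what removes this, and it is the main obstacle to address carefully.

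Granting the estimate, closeness and coarseness follow routinely. For $y\in\sigma$ with minimal vertex $v_0$ we have $g(v_0)=f(v_0)$, so
\[
d_X\bigl(g(y),f(y)\bigr)\le d_X\bigl(g(y),g(v_0)\bigr)+d_X\bigl(f(v_0),f(y)\bigr)\le S+S=2S,
\]
using the estimate for the first term and the coarse modulus (as $d_Y(v_0,y)\le D$) for the second; thus $g$ and $f$ are close with constant $2S$. Finally, a map close to a coarse map is coarse: properness of $X$ gives that $g^{-1}(K)\subset f^{-1}\bigl(N_{2S}(K)\bigr)$ is bounded for bounded $K$, and $d_X(g(y),g(y'))\le d_X(f(y),f(y'))+4S$ together with the control of $f$ shows $g$ is controlled. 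Hence $g$ is a continuous coarse map close to $f$, as required.
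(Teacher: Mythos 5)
Your proof is correct, and it reaches the statement by a genuinely different construction than the paper's. The paper avoids any choice of vertex ordering or skeletal induction: for $y$ with barycentric coordinates $\{(v_1,t_1),\dots,(v_n,t_n)\}$ it defines $g(y)$ as the barycenter of $\{(f(v_1),t_1),\dots,(f(v_n),t_n)\}$, i.e.\ the unique minimizer of $x\mapsto\sum_i t_i\,d_X(x,f(v_i))^2$, invoking \cite[Definition 8.4.10]{Pap05} for existence, uniqueness and continuous dependence on the weights; face compatibility is then automatic, and closeness follows from a one-line variational estimate (comparing the minimizer $g(y)$ against the competitor $f(v_{i_0})$ nearest to it gives $d(g(y),f(v_{i_0}))<C$, with no dependence on the dimension of the simplex). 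Your iterated geodesic coning from the minimal vertex trades that barycenter machinery for the two most elementary consequences of Definition \ref{Busemann} --- uniqueness and Lipschitz dependence of geodesics on their endpoints, and convexity of $x\mapsto d(c,x)$ along geodesics --- at the cost of fixing a total order on vertices, checking agreement on shared faces, and running the induction that makes the bound $d(g(v_0),g(p))\le\max_j d(g(v_0),g(v_j))$ independent of dimension. Both arguments rest on the same two pillars: the uniform, dimension-free bound on spherical diameters of simplices (which converts coarseness of $f$ into a single constant per simplex) and the fact that a map close to a coarse map is coarse; your convexity induction plays exactly the role that the paper's variational inequality plays. What the paper's route buys is canonicity and brevity; what yours buys is self-containedness, since nothing beyond the Busemann inequality itself is needed.
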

\noindent
Here we briefly explain what is the spherical metric.
Let $Y$ be a connected locally finite simplicial complex. 
Denote the $0$-th skeleton by $Y^{(0)}:=\{v_i\}_{i\in \N}$. 
Then for every non-negative integer $k$, 
we identify every $k$-simplex with vertices $\{v_{i_1},\ldots, v_{i_{k+1}}\}$ of $Y$ with 
\[
\Delta_S^k:=\left\{(u_{i_1},\ldots, u_{i_{k+1}})\in [0,1]^{k+1} \left| \sum_{j=1}^{k+1}u_{i_j}^2=1 \right.\right\}.
\]
By using the identification, 
we have so called spherical coordinates   
$\{u_i:Y\to [0,1]\}_{i\in \N}$, where $\sum_{i\in \N}u_i(y)^2$ is a finite sum and equal to $1$. 
Now we have an embedding 
$Y\ni y\mapsto (u_1(y), u_2(y), \ldots)\in S(\ell_2(\N))$,
where $S(\ell_2(\N))$ is the sphere with radius $1$ in $\ell_2(\N)$. 
We endow $Y$ with the metric $d_S$ 
induced from the path-metric of the image of the embedding 
and call $d_S$ the {\it spherical metric}. 
The spherical metric is very familiar with the graph metric in the following sense: 
the restriction of $d_S$ on the first skeleton $Y^{(1)}$
is equal to the graph metric on $Y^{(1)}$ up to $\pi/2$-multiplication.

\begin{proof}[Proof of Lemma \ref{key}]
We denote by $Y^{(0)}$ the $0$-th skeleton of $Y$. 
Then we construct a continuous coarse map $g:Y\to X$ 
with $g|_{Y^{(0)}}=f|_{Y^{(0)}}$ 
such that $g$ and $f$ are close. 

Every point $y$ of $Y$ is uniquely presented 
by the barycentric coordinate $\{(v_1,t_1),\cdots,(v_n,t_n)\}$, 
where $v_i\in Y^{(0)}$ and $t_i\in (0,1]$ with $\sum_{i=1}^n t_i=1$. 
Now for such a point $y$, we define $g(y)\in X$ as 
the barycenter of $\{(f(v_1),t_1),\cdots,(f(v_n),t_n)\}$, 
that is, $g(y)$ is a unique point which attains the minimum of 
\[X\ni x\mapsto \sum_{i=1}^nt_id_X(x,f(v_i))^2\in [0,\infty)\] 
(see \cite[Definition 8.4.10]{Pap05}). 
Then we have a map $g:Y\to X$ with $g|_{Y^{(0)}}=f|_{Y^{(0)}}$.
Since the barycenter continuously depends on weights $t_i$, 
the map $g$ is continuous. 

Now we prove that $g$ is close to $f$. 
Since $f$ is coarse, we have $C>0$ such that 
for any point $y,y'\in Y$ contained in a same simplex, 
$d_X(f(y),f(y'))<C$. 
Take a point $y\in Y$ which is presented 
by the barycentric coordinate $\{(v_1,t_1),\cdots,(v_n,t_n)\}$. 
Then we have $i_0\in\{1,\ldots,n\}$ satisfying $d(g(y),f(v_{i_0}))\le d(g(y),f(v_i))$ 
for any $i\in\{1,\ldots,n\}$. 
Since $g(y)$ is the barycenter of 
$\{(f(v_1),t_1),\cdots,(f(v_n),t_n)\}$ with $\sum_{i=1}^nt_i=1$, 
we have 
\begin{align*}
d(g(y),f(v_{i_0}))^2
=&\sum_{i=1}^n t_id(g(y),f(v_{i_0}))^2
\le \sum_{i=1}^n t_id(g(y),f(v_i))^2 \\
< &\sum_{i=1}^n t_i d(f(v_{i_0}),f(v_i))^2
\le \sum_{i=1}^n t_i  C^2
=C^2.
\end{align*}
Since we have $d(f(y), f(v_{i_0}))<C$ and $f(v_{i_0})=g(v_{i_0})$, 
the triangle inequality implies $d(g(y), f(y))<2C$. 

Since $g$ is close to the coarse map $f$, $g$ is also a coarse map. 
\end{proof}

\begin{Lem}\label{homotopy}
Let $X$ be a proper Busemann space and 
$\phi :X\to X$ be a continuous coarse map which is close to $id_X$. 
Then $id_X$ and $\phi$ are properly homotopic. 
\end{Lem}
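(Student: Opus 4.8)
The plan is to contract $id_X$ to $\phi$ along geodesics, using the convexity of the Busemann metric. Since $X$ is a Busemann space, any two points are joined by a unique geodesic, so for each $x\in X$ I let $\gamma^x\colon[0,1]\to X$ denote the constant-speed parametrization of the geodesic from $x$ to $\phi(x)$, and define
\[
H\colon X\times[0,1]\to X,\qquad H(x,t):=\gamma^x(t).
\]
By construction $H(x,0)=x$ and $H(x,1)=\phi(x)$, so $H(\cdot,0)=id_X$ and $H(\cdot,1)=\phi$. It therefore remains to check that $H$ is continuous and proper; once this is done, $H$ is the desired proper homotopy.

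For continuity I would invoke the Busemann convexity inequality of Definition \ref{Busemann} directly. Given $x,x'\in X$, applying the inequality to the geodesics $\gamma^x$ and $\gamma^{x'}$ (whose endpoint pairs are $x,\phi(x)$ and $x',\phi(x')$) yields
\[
d\bigl(H(x,t),H(x',t)\bigr)\le (1-t)\,d(x,x')+t\,d\bigl(\phi(x),\phi(x')\bigr)
\]
for all $t\in[0,1]$. Since $\phi$ is continuous, the right-hand side tends to $0$ as $x'\to x$, uniformly in $t$. Combined with the fact that each $\gamma^{x'}$ is $d(x',\phi(x'))$-Lipschitz in the parameter, so that $d\bigl(H(x',t),H(x',t')\bigr)\le d(x',\phi(x'))\,|t-t'|$, the triangle inequality then gives joint continuity of $H$.

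For properness I would use the hypothesis that $\phi$ is close to $id_X$: there is $C>0$ with $d(x,\phi(x))<C$ for every $x$, whence $d\bigl(x,H(x,t)\bigr)=t\,d(x,\phi(x))<C$. Thus if $H(x,t)$ lies in a bounded set $K$, then $x$ lies in the $C$-neighborhood of $K$, which is again bounded; since $X$ is proper this neighborhood has compact closure, so $H^{-1}(K)$ is a closed subset of a set of the form $(\text{compact})\times[0,1]$ and is therefore compact. Hence $H$ is proper, and $id_X$ and $\phi$ are properly homotopic. The only point genuinely requiring care is the joint continuity of the geodesic homotopy, but this is exactly what the Busemann inequality delivers; properness then follows formally from closeness and from properness of $X$.
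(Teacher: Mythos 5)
Your proof is correct and follows essentially the same route as the paper: the paper defines the identical geodesic homotopy $h(x,t)$ (the point on the geodesic from $x$ to $\phi(x)$ at parameter $t$), cites Definition \ref{Busemann} for continuity, and leaves properness implicit. Your write-up simply fills in the details the paper omits --- the convexity estimate for joint continuity and the closeness-plus-properness argument for properness of the homotopy --- both of which are exactly right.
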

\begin{proof}
We consider the map 
\[
h:X\times [0,1]\ni (x,t)\mapsto h(x,t)\in X, 
\]
where $h(x,t)$ is characterized as a point on the geodesic from $x$ to $\phi(x)$
with $d(x,h(x,t))=td(x,\phi(x))$. 
Then $h$ is continuous in view of Definition \ref{Busemann}. 
%Also $h$ is proper and satisfies $h(-,0)=id_X$ and $h(-,1)=\phi$. 
Hence $h$ is a proper homotopy between $id_X$ and $\phi$. 
\end{proof}

\begin{Prop}\label{coarsening}
Let $X$ be a proper Busemann space. 
Then $c(X)$ is an isomorphism. 
\end{Prop}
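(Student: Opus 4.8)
The plan is to prove this directly from the anti-\v{C}ech description of coarse $K$-homology, using Lemmas \ref{key} and \ref{homotopy}, rather than reading it off from the diagram \eqref{comm}. Recall that $KX_*(X)=\varinjlim_n K_*(P_n)$, where $P_n$ is the nerve of the $n$-th cover $\mathcal{U}_n$ in an anti-\v{C}ech system of uniformly bounded open covers of $X$, each $P_n$ a connected locally finite simplicial complex, and that $c(X)$ is the direct limit of the maps $(p_n)_*\colon K_*(X)\to K_*(P_n)$ induced by partition-of-unity maps $p_n\colon X\to P_n$, compatibly with the connecting maps $\iota_n\colon P_n\to P_{n+1}$. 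I would equip every $P_n$ with the spherical metric, so that it is a proper metric space (local finiteness makes bounded sets compact) on which the hypotheses of Lemma \ref{key} are available.

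First I would construct coarse inverses on the nerves. Selecting a point of $X$ in each member of $\mathcal{U}_n$ and extending across simplices yields a coarse map $f_n\colon P_n\to X$: the uniform bound on the diameters of the cover furnishes the two coarseness estimates, and properness of $X$ gives properness of preimages. By Lemma \ref{key} I may replace $f_n$ by a continuous coarse map $g_n\colon P_n\to X$ close to it. Then $g_n\circ p_n\colon X\to X$ is a continuous coarse map close to $\mathrm{id}_X$, since $p_n(x)$ lies in the subcomplex spanned by the members of $\mathcal{U}_n$ through $x$, each mapped by $f_n$ within its (uniformly bounded) diameter of $x$. Lemma \ref{homotopy} then gives a proper homotopy $g_n\circ p_n\simeq\mathrm{id}_X$, so that $(g_n)_*\circ(p_n)_*=\mathrm{id}$ on $K_*(X)$ for each $n$.

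Next I would show the $(g_n)_*$ assemble into an inverse. A straight-line homotopy in the Busemann space $X$ (as in the proof of Lemma \ref{homotopy}) shows that $g_{n+1}\circ\iota_n$ is properly homotopic to $g_n$, since the two are close as maps $P_n\to X$; hence $(g_{n+1})_*\circ(\iota_n)_*=(g_n)_*$ and the $(g_n)_*$ descend to a map $\varinjlim_n(g_n)_*\colon KX_*(X)\to K_*(X)$. On the other side, $p_{n+1}\circ g_n$ is close in the spherical metric to $\iota_n$, because $g_n$ sends a vertex of $P_n$ into the corresponding member of $\mathcal{U}_n$ and $p_{n+1}$ then records the members of $\mathcal{U}_{n+1}$ meeting a bounded neighbourhood of that point, which is exactly where $\iota_n$ sends the vertex. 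Invoking the standard fact that close continuous maps into the nerves of an anti-\v{C}ech system induce equal homomorphisms once one passes far enough along the system, this yields $c(X)\circ\varinjlim_n(g_n)_*=\mathrm{id}$ on $KX_*(X)$. Together with the previous paragraph, $\varinjlim_n(g_n)_*$ is a two-sided inverse of $c(X)$, so $c(X)$ is an isomorphism.

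The hard part, and the precise point where the absence of bounded coarse geometry matters, is the control of the maps on the nerves: without uniform local finiteness the complexes $P_n$ may be infinite-dimensional, so I must verify that the spherical metric really makes them proper and that the vertex-selection and partition-of-unity maps obey the coarseness estimates uniformly enough to be close to $\mathrm{id}_X$ and to the $\iota_n$. This is exactly where Lemma \ref{key} is indispensable, as it turns the merely coarse $f_n$ into a continuous map without destroying closeness, and where the Busemann hypothesis re-enters through Lemma \ref{homotopy} (and its analogue for maps $P_n\to X$) to promote closeness to proper homotopy. The colimit-level bookkeeping, namely that closeness between maps of nerves is absorbed by the connecting maps, is routine, but must be done in the spherical metric rather than the graph metric.
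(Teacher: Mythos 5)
Your proposal is correct and takes essentially the same approach as the paper's proof: both run the Higson--Roe coarsening argument over an anti-\v{C}ech system, using Lemma \ref{key} to produce continuous coarse maps from the nerves back to $X$, Lemma \ref{homotopy} (geodesic straight-line homotopies in the Busemann space) to promote closeness to proper homotopy on the $X$ side, and contiguity to absorb closeness on the nerve side. The only cosmetic difference is that you assemble an explicit two-sided inverse $\varinjlim_n (g_n)_*$, whereas the paper concludes directly from the relations $g_i\circ(\varphi_{i-1}\circ\cdots\circ\varphi)\simeq \mathrm{id}_X$ and $(\varphi_k\circ\cdots\circ\varphi)\circ g_i \simeq \varphi_k\circ\cdots\circ\varphi_i$ without checking that the $(g_n)_*$ are compatible with the connecting maps.
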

\begin{proof}
The argument in \cite[Proof of (3.8) Proposition]{H-R95} 
works well on our situation
when we use Lemmas \ref{key} and \ref{homotopy} 
instead of \cite[(3.3) Lemma and (3.4) Lemma]{H-R95}.   
We give a sketch for reader's convenience. 
First of all, we take an anti-\v{C}ech system $\mathcal{U}_1,\mathcal{U}_2,\ldots$ 
of $X$ and a partition of unity of $\mathcal{U}_1$. 
When we denote by $|\mathcal{U}_i|$ the geometrization of $\mathcal{U}_i$ as a nerve complex
for each $i\in \N$, 
we have a coarsening sequence: 
\[
\xymatrix{
&X \ar[r]^{\varphi}& |\mathcal{U}_1|\ar[r]^{\varphi_1}& |\mathcal{U}_2|\ar[r]^{\varphi_2}& \cdots ,  
}
\]
where all maps are proper continuous maps and coarse equivalences. 
By Lemma \ref{key}, we have a continuous coarse map 
$g_i:|\mathcal{U}_i|\to X$ for each $i\in \N$ satisfying that 
$g_i\circ (\varphi_{i-1}\circ\cdots\circ\varphi_1\circ \varphi)$ and 
$(\varphi_{i-1}\circ\cdots\circ\varphi_1\circ \varphi)\circ g_i$ 
are close to $\mathrm{id}_X$ and $\mathrm{id}_{|\mathcal{U}_i|}$, respectively.
Then Lemma \ref{homotopy} implies that 
$g_i\circ (\varphi_{i-1}\circ\cdots\varphi_1\circ \varphi)$ and $\mathrm{id}_X$ are properly homotopic. 
Since $(\varphi_{i-1}\circ\cdots\circ\varphi_1\circ \varphi)\circ g_i$ and $\mathrm{id}_{|\mathcal{U}_i|}$ are close,
we have $k\ge i$ such that $(\varphi_{k}\circ\cdots\circ\varphi_1\circ \varphi)\circ g_i$
and $\varphi_k\circ\cdots\circ\varphi_i$ are contiguous and thus properly homotopic.
Then the coarsening sequence implies that 
$K_*(X)\cong \underrightarrow{\lim}{K_*(|\mathcal{U}_i|)}$, where the right hand side is $KX_*(X)$ 
by definition.  
\end{proof}

Propositions \ref{scalable} and \ref{coarsening} 
imply that $\mu(X)$ is an isomorphism for 
a proper Busemann space (see the commutative diagram (\ref{comm})).

\section{Proof of Theorem \ref{main}}\label{4}
In this section we complete a proof of Theorem \ref{main}. 

When $X$ is an unbounded proper metric space and 
$X\cup W$ is a coarse compactification of $X$, 
we have the following commutative diagram: 
\[
\xymatrix{
&K_*(X) \ar[dr]^{c(X)} \ar[rr]^{A(X)} \ar[ddr]_{\partial_W}
& \ar[r] 
& K_*(C^\ast X) \ar[ddl]^{b_W}
\\
&
&KX_*(X) \ar[ur]^{\mu(X)} \ar[d]^{T_W} 
& 
\\
&
&\widetilde{K}_{*-1}(W)
&, 
}
\]
where $\partial_W$ is a boundary map of the homological long exact sequence
for a pair $(X\cup W, W)$ and $b_W$ is a map defined in \cite[Appendix]{H-R95}
(see also \cite[Section 1.2]{F-O13}).  

\begin{proof}[Proof of Theorem \ref{main}]
Let $X$ be an unbounded proper Busemann space and consider the visual boundary $\partial_v X$. 
We already proved that $A(x)$, $c(X)$ and $\mu(X)$ are isomorphisms in Section \ref{3}. 
Since $X\cup \partial_vX$ is contractible by Proposition \ref{contractible},
$\partial_{\partial_vX}$ is an isomorphism. 
Now the assertion follows from the commutative diagram in the above. 
\end{proof}

\begin{Ex}\label{ex''}
Let $p$ belong to $(1,\infty)$. 
We consider $X_p$ in Example \ref{ex}.
Then Theorem \ref{main} implies the following: 
\begin{align*} 
&KX_*(X_p)\cong K_*(C^\ast X_p)\cong \widetilde{K}_{*-1}(\partial_vX_p)\\
\cong &K_{*-1}(\bigsqcup_{n\in\N} S^{n-1})\cong \prod_{n\in \N}K_{*-1}(S^{n-1})\cong \prod_{n\in\N} \Z,  
\end{align*}
where we used the strong excision property and the cluster axiom 
(see \cite[Definition 7.3.1]{H-R00}).
\end{Ex}

\section{Coarse $K$-theories and coarse cohomologies of proper Busemann spaces}\label{5}
In this section we briefly consider coarse $K$-theories and coarse cohomologies of proper Busemann spaces. 

Let $X$ be a proper metric space and $X\cup W$ be a coarse compactification of $X$. 
Then we have the following commutative diagram: 
\[
\xymatrix{
&K^*(X)  
&  \ar[l]
& K_{*-1}(\mathfrak c^r X) \ar[ll]_{A(X)} \ar[dl]_{\mu(X)}
\\
&
&KX^*(X) \ar[ul]_{c(X)}  
& 
\\
&
&\widetilde{K}^{*-1}(W) 
\ar[uul]^{\partial_{W}} \ar[u]_{T_{W}} \ar[uur]_{b_{W}}. 
& 
}
\]
Here 
$K^*(X)$, 
$\mathfrak c^r X$, 
$K_*(\mathfrak c^r X)$, 
$KX^*(X)$,
$\widetilde{K}^*(W)$, 
$A(X)$,
$\mu(X)$,
$c(X)$,
$\partial_W$, 
$b_W$ 
and $T_W$ are 
the $K$-theory of $X$, 
the reduced stable Higson corona of $X$, 
the $K$-theory of $\mathfrak c^r X$, 
the coarse $K$-theory of $X$, 
the reduced $K$-theory of $W$, 
the co-assembly map of $X$, 
the coarse co-assembly map of $X$, 
the character map of $X$, 
the boundary map of the cohomological long exact sequence of $(X\cup W, W)$, 
the map induced by the inclusion of $C(W)$ into $\mathfrak c^r X$ 
and the transgression map, respectively
(see \cite[Section 4]{E-M06}, \cite[Chapter 4]{Wil09}, \cite[Sections 3, 4]{F-O13} for details). 

When we consider a proper Busemann space $X$ and the visual boundary $\partial_v X$, 
we have the following by a similar argument in Proof of Theorem \ref{main}, but we omit details. 
\begin{Thm}
Let $X$ be an unbounded proper Busemann space
(which is not necessarily with bounded coarse geometry) 
and $\partial_v X$ be the visual boundary.  
Then in the above commutative diagram, 
all maps $A(X)$, $c(X)$, $\mu(X)$, $\partial_{\partial_v X}$, $b_{\partial_v X}$ and $T_{\partial_v X}$
are isomorphisms. 
\end{Thm}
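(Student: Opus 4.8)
The plan is to run the same commutative-diagram argument that established Theorem~\ref{main}, now in cohomology. The cohomological diagram is dual to the homological one, and it contains three assertions that must be verified: that the co-assembly map $A(X)$ is an isomorphism, that the character map $c(X)$ is an isomorphism, and that the boundary map $\partial_{\partial_v X}$ of the pair $(X\cup\partial_v X,\partial_v X)$ is an isomorphism. Once these three are in hand, the commutativity of the diagram forces the remaining maps $\mu(X)$, $b_{\partial_v X}$ and $T_{\partial_v X}$ to be isomorphisms as well, exactly as in the homological case. So the work is concentrated in establishing the three ``input'' isomorphisms, after which the rest is a diagram chase.

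First I would address $\partial_{\partial_v X}$. By Proposition~\ref{contractible} the space $X\cup\partial_v X$ is contractible, so in the long exact $K$-cohomology sequence of the pair $(X\cup\partial_v X,\partial_v X)$ the terms $K^*(X\cup\partial_v X)$ vanish in reduced theory, and the connecting map $\partial_{\partial_v X}:\widetilde K^{*-1}(\partial_v X)\to K^*(X)$ becomes an isomorphism. This is literally the cohomological mirror of the step used in Proof of Theorem~\ref{main}, and Proposition~\ref{visual} guarantees that $X\cup\partial_v X$ is a genuine coarse compactification so that the diagram applies. Next I would handle $A(X)$: the co-assembly map is an isomorphism for scalable spaces by the same Eilenberg-swindle mechanism that gives Proposition~\ref{scalable}, since scalability is a property of the space itself, not of a particular (co)homology theory, so the swindle dualizes without difficulty. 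Finally, for the character map $c(X)$ I would invoke the duality between $c(X)$ and the coarsening map of Proposition~\ref{coarsening}; the anti-\v{C}ech coarsening sequence constructed there, together with Lemmas~\ref{key} and~\ref{homotopy}, identifies $KX^*(X)$ with the appropriate inverse limit of the $K^*(|\mathcal U_i|)$ and yields that $c(X):KX^*(X)\to K^*(X)$ is an isomorphism by the same contiguity-and-proper-homotopy argument read in cohomology.

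The main obstacle I anticipate is the $c(X)$ step. The homological coarsening argument of Proposition~\ref{coarsening} identifies $KX_*(X)$ with a direct limit $\underrightarrow{\lim}K_*(|\mathcal U_i|)$, and direct limits are exact and commute with $K$-homology; in cohomology one instead confronts an inverse limit $\underleftarrow{\lim}K^*(|\mathcal U_i|)$, where $\lim^1$ terms can obstruct the clean identification of $KX^*(X)$ with the limit. I would therefore need to check that the relevant $\lim^1$ groups vanish, or else argue that the coarse $K$-theory is \emph{defined} so as to absorb this subtlety, as in the references \cite{E-M06}, \cite{Wil09} and \cite{F-O13}. The continuity and properness produced by Lemmas~\ref{key} and~\ref{homotopy} are exactly what is needed to control the maps at finite stages, so the remaining issue is purely the behaviour of the limit; modulo that bookkeeping the three input isomorphisms follow, and the diagram chase completes the proof.
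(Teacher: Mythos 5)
Your proposal takes essentially the same approach as the paper, whose entire proof is ``we have the following by a similar argument in Proof of Theorem \ref{main}, but we omit details'' --- that is, exactly the three input isomorphisms you identify ($A(X)$ from scalability via the dual Eilenberg swindle, $c(X)$ from the dualized coarsening argument using Lemmas \ref{key} and \ref{homotopy}, and $\partial_{\partial_v X}$ from the contractibility of $X\cup\partial_v X$) followed by the diagram chase. The ${\lim}^1$ issue you flag is the one genuine subtlety in the dualization and it resolves just as you anticipate: the proper homotopies produced by Lemmas \ref{key} and \ref{homotopy} make the inverse system $\left(K^*(|\mathcal{U}_i|)\right)_i$ pro-isomorphic to the constant system $K^*(X)$, so ${\lim}^1$ vanishes and the Milnor sequence identifies $KX^*(X)$ with $K^*(X)$.
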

\noindent
If $X$ is an unbounded proper $CAT(0)$-space with bounded coarse geometry, then 
the above was known (see \cite[Theorem 4.8]{E-M06} and \cite[Section 4.6]{Wil09}).

Similarly we have the following: 
\begin{Thm}
Let $X$ be an unbounded proper Busemann space
(which is not necessarily with bounded coarse geometry) 
and $\partial_v X$ be the visual boundary. 
Then all maps in the commutative diagram 
\[
\xymatrix{
&H_c^*(X) &HX^*(X) \ar[l]_{c(X)}\\  
&       &\widetilde{H}^{*-1}(\partial_vX) 
\ar[ul]^{\partial_{\partial_vX}} \ar[u]_{T_{\partial_vX}}  
}
\]
are isomorphisms. 
Here $H_c^*(X)$,  
$HX^*(X)$,
$\widetilde{H}^*(W)$, 
$c(X)$,
$\partial_W$, 
and $T_W$ are 
the compactly supported Alexander-Spanier cohomology of $X$, 
the coarse cohomology of $X$, 
the reduced Alexander-Spanier cohomology of $W$, 
the character map of $X$, 
the boundary map of the cohomological long exact sequence of $(X\cup W, W)$,  
and the transgression map, respectively
(see \cite{Roe93} and \cite[Section 3]{F-O13} for details). 
\end{Thm}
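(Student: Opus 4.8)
The plan is to transcribe the proof of Theorem~\ref{main} into compactly supported Alexander-Spanier cohomology, degree by degree. The displayed triangle commutes, with $\partial_{\partial_v X} = c(X) \circ T_{\partial_v X}$, so it suffices to prove that the character map $c(X) \colon HX^*(X) \to H_c^*(X)$ and the boundary map $\partial_{\partial_v X}$ are isomorphisms; the transgression map $T_{\partial_v X} = c(X)^{-1} \circ \partial_{\partial_v X}$ is then an isomorphism automatically, and all three maps in the diagram are isomorphisms.

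First I would prove that the character map $c(X)$ is an isomorphism, which is the cohomological counterpart of Proposition~\ref{coarsening} and the substantive part of the argument. I would reuse verbatim the coarsening sequence $X \xrightarrow{\varphi} |\mathcal{U}_1| \xrightarrow{\varphi_1} |\mathcal{U}_2| \to \cdots$ from the proof of Proposition~\ref{coarsening}, together with the continuous coarse maps $g_i \colon |\mathcal{U}_i| \to X$ supplied by Lemma~\ref{key}. By Lemma~\ref{homotopy} the relevant compositions are properly homotopic to the identity maps, so each of $\varphi$ and $\varphi_i$ is a proper homotopy equivalence. Since compactly supported Alexander-Spanier cohomology is contravariantly functorial for proper continuous maps and invariant under proper homotopy, each induced homomorphism $\varphi_i^* \colon H_c^*(|\mathcal{U}_{i+1}|) \to H_c^*(|\mathcal{U}_i|)$, as well as $\varphi^* \colon H_c^*(|\mathcal{U}_1|) \to H_c^*(X)$, is an isomorphism. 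Recalling that $HX^*(X)$ is computed as the inverse limit $\varprojlim_i H_c^*(|\mathcal{U}_i|)$ along the bonding maps $\varphi_i^*$, and that $c(X)$ is the canonical comparison map from this limit to $H_c^*(X)$, the isomorphism statement follows.

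The single place where this cohomological argument genuinely departs from the homological one of Proposition~\ref{coarsening} is the passage to the limit. There, direct limits are exact, so $\varinjlim_i K_*(|\mathcal{U}_i|) = KX_*(X) \cong K_*(X)$ with no correction term. Here the inverse limit carries a potential $\varprojlim^1$ obstruction, governed by the Milnor sequence. In our situation every bonding map $\varphi_i^*$ is an isomorphism, so the inverse system is Mittag-Leffler, the term $\varprojlim^1_i H_c^{*-1}(|\mathcal{U}_i|)$ vanishes, and $HX^*(X) \cong H_c^*(X)$ via $c(X)$ with no correction. Verifying this vanishing, and checking that the nerve complexes $|\mathcal{U}_i|$ --- which need not be finite dimensional, since $X$ is not assumed to be of bounded coarse geometry --- create no pathology for compactly supported cohomology, is the main point that requires care; it is precisely the cohomological shadow of the difficulty that Higson and Roe left undetailed and that Proposition~\ref{coarsening} resolves on the homological side.

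Finally I would show that $\partial_{\partial_v X}$ is an isomorphism, exactly as in the proof of Theorem~\ref{main}. By Proposition~\ref{visual}, $X \cup \partial_v X$ is a coarse compactification, so that $H_c^*(X)$ is identified with the relative Alexander-Spanier cohomology $H^*(X \cup \partial_v X, \partial_v X)$ of the closed pair and $\partial_{\partial_v X}$ is the connecting homomorphism of its long exact sequence in reduced cohomology. Since $X \cup \partial_v X$ is contractible by Proposition~\ref{contractible}, we have $\widetilde{H}^*(X \cup \partial_v X) = 0$ in every degree, and the long exact sequence forces $\partial_{\partial_v X} \colon \widetilde{H}^{*-1}(\partial_v X) \to H_c^*(X)$ to be an isomorphism. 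Combining the three steps with the commutativity of the triangle completes the proof.
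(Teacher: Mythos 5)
Your overall architecture (prove $c(X)$ and $\partial_{\partial_v X}$ are isomorphisms, get $T_{\partial_v X}$ from commutativity; handle $\partial_{\partial_v X}$ via Propositions \ref{visual} and \ref{contractible} and the long exact sequence of the pair) matches the paper's intent, and your attention to the $\varprojlim^1$ term is exactly the right concern on the cohomological side. But there is a genuine gap in your treatment of $c(X)$: you claim that ``by Lemma \ref{homotopy} the relevant compositions are properly homotopic to the identity maps, so each of $\varphi$ and $\varphi_i$ is a proper homotopy equivalence.'' Lemma \ref{homotopy} only applies to self-maps of the Busemann space $X$; it gives $g_i\circ(\varphi_{i-1}\circ\cdots\circ\varphi)\simeq \mathrm{id}_X$, but says nothing about the other composition $(\varphi_{i-1}\circ\cdots\circ\varphi)\circ g_i$, which is a self-map of the nerve $|\mathcal{U}_i|$ --- and $|\mathcal{U}_i|$ is not a Busemann space. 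Two close maps into a simplicial complex need not be homotopic at all; this is precisely why the proof of Proposition \ref{coarsening} passes to a coarser nerve $|\mathcal{U}_k|$, $k\ge i$, where closeness can be upgraded to contiguity. What the paper's lemmas yield is therefore only a \emph{pro}-homotopy equivalence between the tower $(|\mathcal{U}_i|)$ and the constant tower $X$, not level-wise proper homotopy equivalences; for a general anti-\v{C}ech system the individual nerves can carry spurious topology killed only further along the tower, so the bonding maps $\varphi_i^*$ on $H_c^*$ need not be isomorphisms, and your Mittag-Leffler verification, as written, does not go through.

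The gap is repairable, in two ways. First, a tower that is pro-isomorphic to a constant tower automatically satisfies the Mittag-Leffler condition, so the pro-homotopy equivalence you actually have still forces $\varprojlim^1_i H_c^{*-1}(|\mathcal{U}_i|)=0$ and $\varprojlim_i H_c^*(|\mathcal{U}_i|)\cong H_c^*(X)$; you should run your limit argument at the level of pro-systems rather than claiming level-wise isomorphisms. Second --- and this is the route the paper itself points out immediately after the theorem --- for coarse cohomology no nerve argument is needed: by \cite[(3.33) Proposition]{Roe93} the character map $c(X)\colon HX^*(X)\to H_c^*(X)$ is an isomorphism for \emph{every} uniformly contractible proper metric space, with no bounded coarse geometry hypothesis, and proper Busemann spaces are uniformly contractible. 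This is a genuine asymmetry with the K-homological side, where no such general statement was available and Proposition \ref{coarsening} had to be proved by hand; on the cohomological side the ``hard'' step is already in the literature, and your reconstruction of it is both unnecessary and, in its present form, incorrect.
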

\noindent
In the above, we can know that $c(X)$ is an isomorphism by another reason.
Indeed it is known that the compactly supported Alexander-Spanier cohomology and 
the coarse cohomology are isomorphic by the character map
for any uniformly contractible proper metric space without necessarily bounded coarse geometry
(\cite[(3.33) Proposition]{Roe93}). 
We note that Busemann spaces are uniformly contractible.

\appendix
\section{Bounded coarse geometry}

In this appendix we collect some equivalent definitions of bounded coarse geometry
for reader's convenience (see also \cite[Section 3.1]{Roe03})
and prove that $X_p$ in Example \ref{ex} is without bounded coarse geometry.

\begin{Def}\label{bounded}
Let $\Gamma$ be a discrete metric space. 
Then $\Gamma$ is said to be {\it with bounded geometry} if 
for any $R>0$,  
\[
\sup\{\# B(\gamma,R) \left| \gamma\in \Gamma \right.\}<\infty.
\]
\end{Def}
\noindent 
When a discrete metric space is with bounded geometry, 
so is every subset with the restricted metric.  

The following was introduced in Fan's PhD thesis 
(see \cite[(3.6) Definition]{H-R95} and also \cite[Definition 3.9]{Roe03}). 
\begin{Def}\label{bounded coarse}
Let $X$ be a metric space. 
Then $X$ is said to be {\it with bounded coarse geometry} if 
there exists $\epsilon>0$ satisfying the following: 
for any $R>0$,
\[
\sup\{l \left| x\in X, x_1,\ldots,x_l \in B(x,R), i\neq j, d(x_i,x_j)>\epsilon \right.\}<\infty.
\] 
\end{Def}
\noindent
When a metric space is with bounded coarse geometry, 
so is every subset with the restricted metric. 

Bounded coarse geometry is simply called bounded geometry 
in many literatures.  
However a discrete metric space with bounded coarse geometry 
is not necessarily with bounded geometry 
in the sense of Definition \ref{bounded}.
Therefore we adopt the notion of bounded coarse geometry. 

\begin{Def}
Let $X$ be a metric space and $\Gamma$ be a discrete subset of $X$. 
For $\epsilon> 0$ and $C\ge 0$, $\Gamma$ is said to be 
{\it $\epsilon$-separated} if 
every pair of two distinct elements 
$\gamma_1, \gamma_2\in \Gamma$ satisfies $d(\gamma_1,\gamma_2)> \epsilon$.
For $C\ge 0$, $\Gamma$ is said to be 
{\it $C$-dense} if 
every $x\in X$ satisfies $d(x,\gamma)\le C$ for some $\gamma\in \Gamma$.
Also $\Gamma$ is called a {\it net} of $X$ 
if $\Gamma$ is $C$-dense for some $C\ge 0$. 
\end{Def}
\noindent 
We can show that every metric space has an $\epsilon$-separated net for any $\epsilon> 0$
by using Zorn's lemma. 

\begin{Prop}
Let $X$ be a metric space. 
The the following are equivalent.
\begin{enumerate}
\item[(i)] $X$ is with bounded coarse geometry.  
\item[(ii)] $X$ has a net with bounded geometry. 
\item[(iii)] $X$ is coarsely equivalent to 
a discrete metric space with bounded geometry. 
\item[(iv)] there exists $\epsilon> 0$ such that 
every $\epsilon'$-net of $X$ for any $\epsilon' \ge \epsilon$
is with bounded geometry,
\item[(v)] there exists $\epsilon>0$ satisfying the following: 
for any $R>0$, there exists $N>0$ such that 
for any $x\in X$, 
\[
B(x,R)\subset \bigcup_{i=1,\ldots,N}B(x_i,\epsilon)
\]
for some $x_1,\ldots,x_N\in X$.
\end{enumerate}
\end{Prop}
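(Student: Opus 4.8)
The plan is to prove the direct equivalence (i)$\iff$(v) and then to close the cycle (i)$\Rightarrow$(iv)$\Rightarrow$(ii)$\Rightarrow$(iii)$\Rightarrow$(i). A remark I would use throughout is that a maximal $\epsilon$-separated subset $\Gamma$ of $X$ (which exists by Zorn's lemma) is automatically a net: if some $x$ satisfied $d(x,\gamma)>\epsilon$ for every $\gamma\in\Gamma$, then $\Gamma\cup\{x\}$ would still be $\epsilon$-separated, contradicting maximality, so $\Gamma$ is $\epsilon$-dense. I read the phrase ``$\epsilon'$-net'' in (iv) as an $\epsilon'$-separated net. For (i)$\Rightarrow$(v) I would, given $R$, choose in each ball $B(x,R)$ a maximal $\epsilon$-separated family $x_1,\dots,x_l$; its size is bounded by the supremum $N$ in (i) uniformly in $x$, and maximality yields $B(x,R)\subset\bigcup_{i=1}^l B(x_i,\epsilon)$, which is (v). Conversely, if $B(x,R)$ is covered by $N$ balls $B(x_i,\epsilon)$, any $2\epsilon$-separated family in $B(x,R)$ meets each $B(x_i,\epsilon)$ at most once, hence has at most $N$ members, giving (i) with $\epsilon$ replaced by $2\epsilon$. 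This combinatorial duality is routine.

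For the cycle, (i)$\Rightarrow$(iv) uses the $\epsilon$ of Definition~\ref{bounded coarse}: any $\epsilon'$-separated net $\Gamma$ with $\epsilon'\ge\epsilon$ is in particular $\epsilon$-separated, so each $\Gamma\cap B(\gamma,R)$ is an $\epsilon$-separated family in $B(\gamma,R)$ and is bounded in cardinality by (i), uniformly in $\gamma$; that is exactly bounded geometry of $\Gamma$. Then (iv)$\Rightarrow$(ii) is immediate, since a maximal $\epsilon$-separated set is an $\epsilon$-net and so, by (iv), a net with bounded geometry. For (ii)$\Rightarrow$(iii), a net $\Gamma$ is $C$-dense, so the inclusion $\Gamma\hookrightarrow X$ is a coarse map that is coarsely surjective, hence a coarse equivalence, exhibiting $X$ as coarsely equivalent to the discrete bounded-geometry space $\Gamma$.

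The main obstacle will be (iii)$\Rightarrow$(i), i.e.\ that bounded coarse geometry is a coarse invariant; the delicate point is to fix the separation constant for $X$ in terms of the coarse-equivalence data. Suppose $f\colon X\to\Gamma$ and $g\colon\Gamma\to X$ are coarse with $d_X(g f(x),x)\le D$ for all $x$, where $\Gamma$ is discrete with bounded geometry (hence with bounded coarse geometry for any separation constant, since an $\epsilon$-separated subset of a ball is a subset of that ball). Fix any $\epsilon_\Gamma>0$, let $S'$ control $g$ at scale $\epsilon_\Gamma$, and set $\epsilon_X:=2D+S'$. If $x_1,\dots,x_l\in B(x,R)$ are $\epsilon_X$-separated, then $d_\Gamma(f(x_i),f(x_j))\le\epsilon_\Gamma$ would force $d_X(x_i,x_j)\le 2D+S'=\epsilon_X$, a contradiction; hence the $f(x_i)$ are $\epsilon_\Gamma$-separated and in particular distinct, while they all lie in a ball $B(f(x),S_R)$ whose radius $S_R$ controls $f$ at scale $R$. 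Bounded geometry of $\Gamma$ then bounds $l$ by some $N(R)$ independent of $x$, which is (i) with constant $\epsilon_X$ and closes the cycle. I expect the only subtlety to be the bookkeeping of these constants; all remaining verifications are routine.
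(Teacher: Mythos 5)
Your proof is correct, but it is organized differently from the paper's. You share with the paper the chain (i)$\Rightarrow$(iv)$\Rightarrow$(ii) (your arguments there are essentially the paper's), but the two proofs close the loop in different ways. The paper proves (ii)$\Rightarrow$(v) directly (covering $B(x,R)$ by $C$-balls centered at net points near a fixed $\gamma_x$) and then quotes \cite[Proposition 3.2 (d)]{Roe03} for the equivalence (i)$\Leftrightarrow$(v); the equivalence (ii)$\Leftrightarrow$(iii) is treated as an easy side remark that never enters the cycle, and the coarse invariance of bounded coarse geometry is stated \emph{after} the proposition as a consequence of (iii). You instead route the cycle through (iii): you prove (ii)$\Rightarrow$(iii) (the inclusion of a net is a coarse equivalence) and then (iii)$\Rightarrow$(i), which is precisely that coarse-invariance statement, by tracking the constants $D$, $S'$, $S_R$ of the coarse-equivalence data; and you make (i)$\Leftrightarrow$(v) self-contained via maximal $\epsilon$-separated families (maximality gives the covering, and for the converse a $2\epsilon$-separated family meets each $\epsilon$-ball at most once, so the separation constant doubles, which is harmless). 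What each approach buys: the paper's route is shorter and avoids all constant bookkeeping by outsourcing (i)$\Leftrightarrow$(v) to Roe's book, but it must then \emph{derive} coarse invariance from the finished proposition; yours needs no external citation and obtains coarse invariance as an internal step. Your reading of ``$\epsilon'$-net'' as ``$\epsilon'$-separated net'' agrees with how the paper's own proof of (i)$\Rightarrow$(iv) interprets condition (iv), so there is no discrepancy on that point, and the small glosses you leave (existence of a coarse inverse to the inclusion of a $C$-dense subset, open-versus-closed ball conventions) are at the same level of routine as the paper's ``it is easy to show.''
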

\begin{proof}
Conditions (i) and (v) are equivalent by \cite[Proposition 3.2 (d)]{Roe03}.
It is easy to show equivalence between conditions (ii) and (iii). 
Trivially condition (iv) implies condition (ii). 

Now we prove that condition (i) implies condition (iv). 
Take $\epsilon$ in Definition \ref{bounded coarse}. 
We consider any $\epsilon'$-separated net $\Gamma$ where $\epsilon'\ge \epsilon$. 
Then for any $R>0$, any $\gamma\in \Gamma$
\begin{align*}
&\#(B(\gamma,R)\cap \Gamma)\\
\le&
\sup\{l \left| x\in X, x_1,\ldots,x_l \in B(x,R), i\neq j, d(x_i,x_j)>\epsilon \right.\}.
\end{align*} 
Hence $\Gamma$ is with bounded geometry. 

Finally we show that condition (ii) implies condition (v). 
Suppose that $\Gamma$ is a $C$-dense net of $X$ for some $C\ge 0$
and with bounded geometry. 
For each $x\in X$, we chose $\gamma_x\in \Gamma$ with $d(x,\gamma_x)\le C$. 
Then for any $R>0$ and any $x\in X$, we have  
\[
B(x,R)\subset \bigcup_{\gamma \in B(x,R+C)\cap \Gamma}B(\gamma, C)
\subset \bigcup_{\gamma \in B(\gamma_x,R+2C)\cap \Gamma}B(\gamma, C). 
\]
Since $\Gamma$ is with bounded geometry, 
\[
\sup\{\#(B(\gamma_x,R+2C)\cap \Gamma) \left| x\in X\right.\}<\infty.
\]
\end{proof}

It follows from condition (iii) that 
bounded coarse geometry is a coarse geometric property, 
that is, 
for two coarsely equivalent metric spaces $X$ and $Y$, 
$X$ is with bounded coarse geometry 
if and only if 
so is $Y$. 

Conditions (ii) and (iii) are useful when we show that
some spaces are with bounded coarse geometry. 
On the other hand we can use condition (iv) to prove that 
some space is without bounded coarse geometry. 
\begin{Ex}\label{ex'}
Let $X_p$ be a metric space in Example \ref{ex}. 
Let $k$ be a positive integer. 
We take the subset $(k\Z)^n$ of $\ell_p(n)$ for $n\in\N$.  
When we naturally regard $\Gamma_k:=\sqcup_{n\in \N}(k\Z)^n$ as a subset of $X_p$, 
$\Gamma_k$ is a $(k-0.01)$-separated net of $X_p$. 
Since $\Gamma_k$ is without bounded geometry, 
it follows from condition (iv) that $X_p$ is without bounded coarse geometry.  
\end{Ex}

\section*{Acknowledgements}
The authors would like to thank Professor Hiroyasu Izeki 
for very helpful discussions. 
T. Fukaya and S. Oguni are 
supported by Grant-in-Aid for Scientific Researches for Young Scientists (B) 
(No. 23740049) and (No. 24740045) Japan Society of Promotion of Science, respectively.

%%%%%%%%%%%%%%%%%%%%%%%%%%%%%%%%%%

\end{document}